\newcommand\bigzero{\makebox(0,0){\text{\huge0}}}
\newtheorem{assumption}{Assumption}
\newtheorem{proposition}{Proposition}
\newtheorem{lemma}{Lemma}
\newtheorem{theorem}{Theorem}
\newtheorem{remark}{Remark}
\newtheorem{definition}{Definition}
\newtheorem{corollary}{Corollary}
\title{\LARGE \bf The Inverse Problem of Linear-Quadratic Differential Games: When is a Control Strategies Profile Nash? 
}
 \author{Yunhan Huang$^{1}$, Tao Zhang$^{1}$, and Quanyan Zhu$^{1}$
 \thanks{$^{1}$ Y. Huang, T. Zhang, and Q. Zhu are with the Department of Electrical and Computer Engineering, New York University, 370 Jay St., Brooklyn, NY.
 {\tt\small \{yh.huang, tz636, qz494\}@nyu.edu}}
 }
\begin{document}

\maketitle

\begin{abstract}
This paper aims to formulate and study the inverse problem of non-cooperative linear quadratic games: Given a profile of control strategies, find cost parameters for which this profile of control strategies is Nash. We formulate the problem as a leader-followers problem, where a leader aims to implant a desired profile of control strategies among selfish players. In this paper, we leverage frequency-domain techniques to develop a necessary and sufficient condition on the existence of cost parameters for a given profile of stabilizing control strategies to be Nash under a given linear system. The necessary and sufficient condition includes the circle criterion for each player and a rank condition related to the transfer function of each player. The condition provides an analytical method to check the existence of such cost parameters, while previous studies need to solve a convex feasibility problem numerically to answer the same question. We develop an identity in frequency-domain representation to characterize the cost parameters, which we refer to as the Kalman equation. The Kalman equation reduces redundancy in the time-domain analysis that involves solving a convex feasibility problem. Using the Kalman equation, we also show the leader can enforce the same Nash profile by applying penalties on the shared state instead of penalizing the player for other players' actions to avoid the impression of unfairness.


\end{abstract}

\section{Introduction}
The non-cooperative differential game was firstly driven by \cite{isaacs1954differential,isaacs1954differential2,isaacs1955differential4}, involves a set of self-interested players who optimizes their somewhat conflicting objectives over a finite or infinite horizon in a dynamic environment that can usually be described by differential or difference equations. After almost 70 years of development, The theory of non-cooperative differential games has been enriched \cite{bacsar1998dynamic,engwerda2005lq} and applied to many areas such as economics and management science \cite{dockner2000differential}, military operation \cite{weintraub2020introduction,isaacs1955differential4,huang2021defending}, engineering \cite{franzini2016h}, and the modelling and control of epidemics \cite{huang2019differential,huang2022game,reluga2010game}. The most popular solution concept in such games is called Nash equilibrium, which is a profile of strategies where no player can reduce his cost by unilaterally deviating his strategy from it. Characterizing the Nash equilibrium usually involves knowing players' objective functions and applying either dynamic programming or minimum principle to show the optimality of every player's strategy while fixing the strategies of other players \cite{bacsar1998dynamic}.


The inverse problem of differential games consists of characterizing the objective functions (or the parameters that parameterize the objective functions) of individual players based on their observed actions or strategies. The problem has recently caught much attention \cite{molloy2019inverse,inga2019solution,ratliff2012pricing,awasthi2020inverse,lian2022data} due to its application in pricing design \cite{ratliff2012pricing,zhang2021differential}, bonics and humanoid robots \cite{inga2019validation,molloy2018inverse}, and apprentice learning for multi-agent systems \cite{lian2021inverse,lian2022data}.

In this paper, we study the inverse problem of non-cooperative linear-quadratic differential games. The problem is to find the players' objective functions that make a given strategy profile a Nash equilibrium. These objective functions are quadratic in the state and players' controls and parameterized by the cost parameters. We call the cost parameters that make a strategy profile a Nash equilibrium Nash-inducing cost parameters. Previous work leverages the coupled Riccati equation derived from the dynamic programming equation to form a convex feasibility problem \cite{ratliff2012pricing,inga2019solution}. Then, such Nash-inducing cost parameters can be found by numerically solving the convex feasibility problem. However, a fundamental question remains open: when is a control strategies profile a Nash equilibrium? That is, given a strategy profile, whether there exist cost parameters such that the given strategy profile is Nash? The answer to the question should only be decided by the dynamic equations of the players and the given strategy profile. 

In this paper, inspired Kalman's seminal work in inverse optimal control \cite{kalman1964when}, we answer this fundamental question by leveraging frequency-domain techniques. We develop a necessary and sufficient condition for a profile of strategies to be Nash without involving the cost parameters. The convex feasibility problem posed in previous work \cite{ratliff2012pricing,inga2019solution} can then be checked analytically. The necessary and sufficient condition only depends on the given profile of strategies and the dynamic equations of the players. More specifically, the necessary and sufficient condition involves a circle criterion for each player and a rank condition related to the denominator of the transfer function of each player. We also derive an identity in frequency-domain representation, which we refer to as the Kalman equation. The Kalman equation characterizes the cost parameters that make a profile of strategies Nash. Compared with the feasibility conditions derived from the coupled Riccati equation in the time domain, the Kalman equation helps reduce the redundancy in state-space representation. The Kalman equation shows that the leader can implant the same Nash profile by applying penalties on the shared state without penalizing the player for other players' actions, which further reduces the number of cost parameters we need to characterize.

\subsection{Notation}
Let $\mathbb{R}$ be the space of real numbers and
$\mathbb{S}_+$ the set of all real-valued symmetric positive semi-definite matrices. Let
$\mathbb{S}_{++}$ denote the set of real-valued symmetric positive definite matrices.
The identity matrix of dimension $n$ is denoted by $I_n$. Let $\mathbb{C}$ denote the complex plane. Define $\mathbb{C}_{-} \coloneqq \{s\in\mathbb{C}\vert \mathfrak{Re}(s)<0\}$ and $\mathbb{C}_+ = \mathbb{C}-\mathbb{C}_-$, where $\mathfrak{Re}(s)$ is the real part of $s\in\mathbb{C}$. The Kronecker product is denoted by $\otimes$.

\section{Problem Formulation}\label{sec:ProblemForm}

Consider an $N$-player differential game with system dynamics
\begin{equation}\label{Eq:SystemDynamics}
\dot{x}(t) = Ax(t) + \sum_{i=1}^N B_i u_i(t),\ \ \ x(0)=x_0.
\end{equation}
Here, $x$ is the $n$-dimensional state of the system; $u_i$ contains the $m_i$-dimensional variables player $i$ can control; $x_0$ is the initial state of the system (arbitrarily chosen). The system matrix $A$ and the control matrices $B_i$ are real-valued matrices with proper dimension. Suppose there are not redundant control variables, i.e., $B_i$ has rank $m_i$ for every $i$. Denote $\mathcal{N}\coloneqq \{1,2,\cdots,N\}$ the set of players.

The cost criterion or objective function player $i\in\mathcal{N}$ aims to minimize is:
\begin{equation}\label{Eq:CostCriteria}
J_i(u_1,\cdots,u_N,x_0) = \int_{0}^\infty x(t)'Q_ix(t) + \sum_{j\in\mathcal{N}} u_j'(t) R_{ij} u_j(t) dt
\end{equation}
with $Q_i\in\mathbb{S}_{+}^n$ $R_{ii}\in\mathbb{S}_{++}^{m_i}$ for $i,j\in\mathcal{N}$, and $R_{ij}\in\mathbb{S}_{+}^{m_j}$ for $i\neq j$, $i,j\in\mathcal{N}$.
We assume that the players have closed-loop perfect state (CLPS) information pattern \cite[p.~225]{bacsar1998dynamic} and their strategies are stationary and linear in the state, i.e.,  $u_i = K_ix$ for $i\in\mathcal{N}$, where $K_i \in \mathbb{R}^{m_i\times n}$.
\begin{assumption}\label{Assum:Stabilizable}
The system (\ref{Eq:SystemDynamics}) described is stabilizable, i.e., the set 
$$
\mathcal{K}= \left\{ (K_1,\cdots,K_N)\middle\vert A - \sum_{i\in\mathcal{N}} B_i K_i\textrm{ is Hurwitz} \right\}
$$
is non-empty.
\end{assumption}
Since the controls are taking the form $u_i = K_i x$, we can write $J_i$ as a function of $K_i,x_0$.  Now we define the concept of Nash equilibrium under the CLPS information pattern as follows
\begin{definition}[Feedback Nash Equilibrium]
An $N$-tuple $\mathbf{K^*}=(K^*_1,\cdots,K_N^*)$ is called a feedback Nash equilibrium if for every $i$,
$$
J_i(\mathbf{K}^*,x_0) \leq J_i(\mathbf{K}^*_{-i}(K_i),x_0),
$$
for all $x_0\in\mathbb{R}^n$ and for all $K_i$ such that $\mathbf{K}^*_{-i}(K_i) \in \mathcal{K}$, where $\mathbf{K}^*_{-1}(K_i) = (K_1^*,\cdots, K_{i-1}^*,K_i,K_{i+1}^*,\cdots, K_{N}^*)$. 
\end{definition}

To give the inverse problem more context, we suppose there is a leader who has influence on the $N$-player differential game. We refer the $N$ players as followers. A leader's influence on the game is through the choices of cost matrices $Q_i$ and $\{R_{ij}\}_{ij\in\mathcal{N}}$ in (\ref{Eq:CostCriteria}) for $i\in\mathcal{N}$. The goal of the leader is to find cost parameters such that the Nash equilibrium of the game is $\mathbf{K}^\dag = (K_1^\dag,\cdots,K_N^\dag)$, a profile of strategies that the leader wants the followers to adopt. 

\begin{assumption}\label{Assum:StratStable}
The strategy favored by the leader stabilizes the system $(\ref{Eq:SystemDynamics})$, i.e., $\mathbf{K}^\dagger \in \mathcal{K}$.
\end{assumption}

We can define the strategy space of the leader by
$$
\begin{aligned}
\Gamma_0 \coloneqq \{\{Q_i\}_{i\in\mathcal{N}},\{R_{ij}\}_{i,j\in\mathcal{N}}: &Q_i\in\mathbb{S}^n_+, R_{ii}\in \mathbb{S}^{m_i}_{++},i\in\mathcal{N},\\
&R_{ij}\in\mathbb{S}_{+}^{m_i},j\neq i,i,j\in\mathcal{N}\}.
\end{aligned}
$$

The leader announces his strategy $\gamma_0 \in \Gamma_0$ and the followers play the $N$-player differential game defined by (\ref{Eq:SystemDynamics}) and (\ref{Eq:CostCriteria}) with $\{Q_i\}_{i\in\mathcal{N}}$ and $\{R_{ij}\}_{i,j\in\mathcal{N}}$ given by the leader's announced strategy $\gamma_0$. We assume that followers are rational and play a Nash equilibrium. Anticipating that the followers play a Nash equilibrium, the leader aims to find $\gamma_0\in\Gamma_0$ such that the Nash equilibrium of the follower game is $\mathbf{K}^\dag$.

\begin{remark}
How the leader chooses $\mathbf{K}^\dag\in\mathcal{K}$ depends on applications. Since our result applies to any stabilizing $\mathbf{K}^\dag\in\mathcal{K}$, we skill the discussion of how to choose $\mathbf{K}^\dag$ and assume $\mathbf{K}^\dag$ is given. Note that the result can be also extended to the partial observation scenario. To study this case, we can simply let $K_i^\dag = \tilde{K}_i^\dag C_i$.
\end{remark}

In this paper, we address the leader's problem by answering the following fundamental questions: given $\mathbf{K}^\dag$, does there exist $\gamma_0\in\Gamma_0$ such that the Nash equilibrium of the follower game is $\mathbf{K}^\dag$? It is ideal to answer the existence question only using the profile of strategies $\mathbf{K}^\dag$ and the system dynamics $(A,[B_1,B_2,\cdots,B_N])$ without explicitly finding a tuple of $\{Q_{i}\}_{i\in\mathcal{N}}$, $\{R_{ij}\}_{i,j\in\mathcal{N}}$. Before we address the above questions in the next section, we review some useful preliminary results.


\begin{theorem}{\cite[p.~337]{bacsar1998dynamic},\cite[Theorem 4]{engwerda2000feedback}}\label{Theo:LQGame}
Suppose there exist $N$ symmetric matrices $P_i\in\mathbb{S}^n$, $i\in\mathcal{N}$ such that the algebraic Riccati equations (AREs)
\begin{equation}\label{Eq:AREs}
Q_i + P_i A_{cl} + A_{cl}' P_i +  \sum_{j\in\mathcal{N}} P_j B_j R_{jj}^{-1} R_{ij}R_
{jj}^{-1}B_j'P_j =0
\end{equation}
hold for $i\in\mathcal{N}$ with $A_{cl}\coloneqq A - \sum_{i\in\mathcal{N}}B_iR_{ii}^{-1}B_i'P_i$ being Hurwitz.  Define $K^*_i$ as
$$
K_i^* = R_{ii}^{-1} B_i' P_i.
$$
Then, $\mathbf{K}^* = (K_1^*,\cdots,K_N^*)$ constitutes a Nash equilibrium and $J_i(x_0,\mathbf{K}^*)= x_0'P_ix_0$. Conversely, if $\mathbf{K}^* = (K_1^*,\cdots,K_N^*)$ is a Nash equilibrium, the set of AREs (\ref{Eq:AREs}) has a stabilizing solution.
\end{theorem}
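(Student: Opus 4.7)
The plan is to split the statement into its two directions, each reducing to a single-player infinite-horizon LQR analysis.

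For the sufficiency direction (existence of stabilizing $(P_i)$ implies $\mathbf{K}^*$ is Nash), I would fix an arbitrary player $i$ and freeze the other followers' feedback at $u_j = K_j^* x = R_{jj}^{-1}B_j'P_j x$ for $j\neq i$. Then player $i$ faces a standard LQR problem with effective plant $\dot x = (A - \sum_{j\neq i} B_j K_j^*)x + B_i u_i$ and effective running cost $x'(Q_i + \sum_{j\neq i}(K_j^*)' R_{ij} K_j^*)x + u_i' R_{ii} u_i$. The first key step is an algebraic identification: after splitting $A_{cl}$ as $(A - \sum_{k\neq i}B_k K_k^*) - B_i K_i^*$, the $i$-th equation in \eqref{Eq:AREs} is exactly the standard LQR Riccati equation of this reduced problem, with $P_i$ as a candidate stabilizing solution (stabilizing because $A_{cl}$ is Hurwitz by hypothesis). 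A completion-of-squares argument on $\int_0^\infty \frac{d}{dt}(x'P_i x)\,dt$, whose boundary term vanishes as $t\to\infty$ thanks to $A_{cl}$ being Hurwitz, then delivers
\[
J_i(\mathbf{K}^*_{-i}(K_i),x_0) = x_0' P_i x_0 + \int_0^\infty x'(K_i - K_i^*)' R_{ii}(K_i - K_i^*)x\,dt
\]
for every admissible deviation $K_i$. Since $R_{ii}\succ 0$, the unique minimizer is $K_i = K_i^*$, yielding both the Nash property and the cost value $J_i(x_0,\mathbf{K}^*) = x_0'P_i x_0$.

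For the necessity direction, I would again isolate player $i$ and exploit the Nash hypothesis: $K_i^*$ is the optimal feedback response of player $i$ to the frozen $K_{-i}^* x$, which is precisely the single-player LQR problem described above. Classical LQR theory---using the stabilizability of $(A - \sum_{j\neq i}B_j K_j^*,\,B_i)$ inherited from $\mathbf{K}^*\in\mathcal{K}$ and the positive semidefiniteness of the effective state weight---provides a unique symmetric stabilizing solution $P_i$ to its Riccati equation, and that equation coincides with \eqref{Eq:AREs} for index $i$. Assembling the $P_i$ across $i\in\mathcal{N}$ and using $R_{ii}^{-1}B_i'P_i = K_i^*$ identifies $A_{cl}$ with the Nash closed loop $A - \sum_i B_i K_i^*$, which is Hurwitz by Assumption~\ref{Assum:Stabilizable}.

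The main obstacle is the bookkeeping that aligns \eqref{Eq:AREs} with the reduced single-player LQR ARE. The sum in \eqref{Eq:AREs} runs over all $j\in\mathcal{N}$ including $j=i$, so an apparent extra quadratic $P_i B_i R_{ii}^{-1} B_i' P_i$ appears; this term must be tracked against the two copies of $-P_i B_i R_{ii}^{-1} B_i' P_i$ sitting inside $P_i A_{cl} + A_{cl}' P_i$, and only after combining them does one recover the single $-P_i B_i R_{ii}^{-1} B_i' P_i$ term expected in the reduced LQR ARE. A secondary subtlety is that admissibility in the Nash definition restricts deviations to $K_i$ with $\mathbf{K}^*_{-i}(K_i)\in\mathcal{K}$; this is exactly the class over which the LQR minimizer is defined, so the completion-of-squares inequality transfers directly into the Nash inequality.
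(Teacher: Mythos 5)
The paper does not prove this theorem at all --- it is imported verbatim from \cite[p.~337]{bacsar1998dynamic} and \cite[Theorem~4]{engwerda2000feedback} --- so there is no in-paper proof to compare against; your proposal has to be judged against the standard argument in those references, which it essentially reproduces. Your sufficiency direction is correct and complete in outline: the bookkeeping you flag (the $j=i$ term of the sum cancelling against one of the two copies of $P_iB_iR_{ii}^{-1}B_i'P_i$ hidden in $P_iA_{cl}+A_{cl}'P_i$) is exactly right, and the completion-of-squares identity together with the admissibility restriction $\mathbf{K}^*_{-i}(K_i)\in\mathcal{K}$ (which guarantees the boundary term vanishes) delivers both the Nash inequality and $J_i=x_0'P_ix_0$.

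The one genuine soft spot is the necessity direction. You invoke ``classical LQR theory'' to produce a stabilizing solution of the reduced ARE from stabilizability of $(\tilde{A}_i,B_i)$ and $\tilde{Q}_i\succeq 0$ alone, but that existence result additionally requires detectability of $(\tilde{Q}_i^{1/2},\tilde{A}_i)$ (or absence of unobservable imaginary-axis modes), which is not assumed here; as stated your step either quietly imports an extra hypothesis or is circular. The repair is to run the argument the other way around: since $\mathbf{K}^*\in\mathcal{K}$, let $P_i\succeq 0$ be the solution of the Lyapunov equation $P_iA_{cl}+A_{cl}'P_i+\tilde{Q}_i+(K_i^*)'R_{ii}K_i^*=0$ for the Nash closed loop, and use the first-order optimality of $K_i^*$ among stabilizing feedbacks (perturbing $K_i^*+\epsilon\Delta$, admissible for small $\epsilon$ because Hurwitz is an open condition, and choosing $\Delta=R_{ii}K_i^*-B_i'P_i$) to force $R_{ii}K_i^*=B_i'P_i$; substituting back turns the Lyapunov equation into \eqref{Eq:AREs}, and the solution is stabilizing because its closed loop \emph{is} $A_{cl}$. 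With that substitution your proof is sound and matches the route taken in the cited references.
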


Theorem \ref{Theo:LQGame} presents a sufficient and necessary condition for characterizing the Nash equilibrium for the follower's differential game.

\section{Main Results}\label{Sec:TheoreticalResults}

Given a target strategy that the leader aims to install in the followers $\mathbf{K}^\dag\in\mathcal{K}$, define a set $\Theta_{\mathbf{K}^\dag}$ whose elements are the tuples
$$(Q_1,\cdots,Q_N,R_{11},\cdots,R_{NN},P_1,\cdots,P_N)$$
that satisfy the following constraints
\begin{equation}\label{Eq:FeasibilitySet}
\begin{aligned}
    Q_i + P_i {A_{cl}^\dag} + {A_{cl}^\dag}' P_i + \sum_{j\in\mathcal{N}} {K_j^\dag}'R_{ij} K_j^\dag &= 0,\\
    R_{ii} K^\dag_i &= B_i' P_i,\\
    R_{ii} \succ 0,
    R_{ij} & \succeq 0, j\neq i\\
    Q_i &\succeq 0,\\
    P_i & \succeq 0,
\end{aligned} 
\end{equation}
for $i\in\mathcal{N}$, where $A_{cl}^\dagger= A -\sum_{i\in\mathcal{N}} B_i K_i^\dag $. 
\begin{proposition}\label{Prop:NonEmptySet}
Given a target strategy $\mathbf{K}^\dag$ satisfying Assumption \ref{Assum:StratStable}, i.e., $\mathbf{K}^\dag \in\mathcal{K}$, $\mathbf{K}^\dag$ is a Nash equilibrium of the follower game defined by (\ref{Eq:SystemDynamics}) and (\ref{Eq:CostCriteria}) under some $\{\{Q_i\}_{i\in\mathcal{N}},\{R_{ij}\}_{ij\in\mathcal{N}}\}$ if and only if $\Theta_{\mathbf{K}^\dag}$ is non-empty.
\end{proposition}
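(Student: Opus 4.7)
The plan is to prove both directions as essentially a bookkeeping translation of Theorem~\ref{Theo:LQGame} into the reparametrized form (\ref{Eq:FeasibilitySet}). The central algebraic identity that links the two formulations is obtained by eliminating $B_i'P_i$ via the stationarity condition $R_{ii}K_i^\dag=B_i'P_i$: under this identity,
\[
P_j B_j R_{jj}^{-1} R_{ij} R_{jj}^{-1} B_j' P_j \;=\; (K_j^\dag)' R_{ij}\, K_j^\dag,
\]
and $A_{cl}=A-\sum_i B_iR_{ii}^{-1}B_i'P_i$ collapses to $A_{cl}^\dag=A-\sum_i B_iK_i^\dag$. With this observation in hand, the AREs (\ref{Eq:AREs}) and the first equation of (\ref{Eq:FeasibilitySet}) are equivalent.

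For the ``only if'' direction, I would assume $\mathbf{K}^\dag$ is a feedback Nash equilibrium under some cost parameters $\{Q_i\}$, $\{R_{ij}\}$. By the converse statement in Theorem~\ref{Theo:LQGame}, there exist symmetric matrices $P_i$ solving the coupled AREs with $A_{cl}$ Hurwitz and $K_i^*=R_{ii}^{-1}B_i'P_i$. Matching this to $\mathbf{K}^\dag$ (the Nash equilibrium by hypothesis), I get $R_{ii}K_i^\dag=B_i'P_i$, which is the second line of (\ref{Eq:FeasibilitySet}). Substituting into the ARE via the identity above yields the first line of (\ref{Eq:FeasibilitySet}). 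The remaining semidefiniteness conditions $Q_i\succeq 0$, $R_{ii}\succ 0$, $R_{ij}\succeq 0$ hold by construction of the cost functional; the only nontrivial one is $P_i\succeq 0$, which follows from $J_i(\mathbf{K}^\dag,x_0)=x_0'P_i x_0$ (also given by Theorem~\ref{Theo:LQGame}) together with $J_i(\mathbf{K}^\dag,x_0)\geq 0$ for all $x_0$ (since the integrand is nonnegative under the PSD cost assumptions).

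For the ``if'' direction, I would pick any tuple $(Q_i,R_{ij},P_i)\in\Theta_{\mathbf{K}^\dag}$ and verify the hypotheses of the sufficiency part of Theorem~\ref{Theo:LQGame}. From $R_{ii}K_i^\dag=B_i'P_i$ and $R_{ii}\succ 0$ I recover $K_i^\dag=R_{ii}^{-1}B_i'P_i$, so $A_{cl}=A_{cl}^\dag$; Assumption~\ref{Assum:StratStable} guarantees $A_{cl}^\dag$ is Hurwitz. Reversing the algebraic identity above converts the first line of (\ref{Eq:FeasibilitySet}) back into the ARE~(\ref{Eq:AREs}). Theorem~\ref{Theo:LQGame} then certifies that $\mathbf{K}^*=(R_{11}^{-1}B_1'P_1,\dots,R_{NN}^{-1}B_N'P_N)=\mathbf{K}^\dag$ is a feedback Nash equilibrium of the follower game under the chosen cost parameters.

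There is no real obstacle here; the proposition is essentially a restatement of Theorem~\ref{Theo:LQGame} in the affine-in-$P_i$ parametrization that the rest of the paper will exploit. The only minor subtlety is ensuring $P_i\succeq 0$ in the forward direction, which requires invoking the cost-value identity $J_i(\mathbf{K}^\dag,x_0)=x_0'P_ix_0$ rather than reading positivity off the AREs directly.
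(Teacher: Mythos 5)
Your proof is correct and follows exactly the route the paper intends: the paper gives no detailed argument, stating only that the proposition ``is a direct result of applying Theorem~\ref{Theo:LQGame},'' and your proposal simply spells out that application (substituting $R_{ii}K_i^\dag = B_i'P_i$ into the AREs to obtain (\ref{Eq:FeasibilitySet}) and reversing the substitution for the converse). The handling of $P_i\succeq 0$ via the cost-value identity is a reasonable way to fill in the one detail the paper leaves implicit.
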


Proposition \ref{Prop:NonEmptySet} is a direct result of applying Theorem \ref{Theo:LQGame}. One can find $\{\{Q_i\}_{i\in\mathcal{N}},\{R_{ij}\}_{ij\in\mathcal{N}}\}$ that renders $\mathbf{K}^\dag$ a Nash equilibrium by finding the feasibility set $\Theta_{\mathbf{K}^\dag}$ of (\ref{Eq:FeasibilitySet}). The following lemma shows that (\ref{Eq:FeasibilitySet}) is indeed a convex feasibility problem.

\begin{lemma}
\begin{enumerate}
    \item If $\theta \in \Theta_{\mathbf{K}^\dag}$, for any given $\alpha >0$, $\alpha \theta \in \Theta_{\mathbf{K}^\dag}$.
    \item The feasible set $\Theta_{\mathbf{K}^\dag}$ is convex.
\end{enumerate}
\end{lemma}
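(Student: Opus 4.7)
The plan is to observe that the defining conditions of $\Theta_{\mathbf{K}^\dag}$ in (\ref{Eq:FeasibilitySet}) split into two kinds: linear equalities that are homogeneous of degree one in the tuple $\theta$, and memberships in convex cones of symmetric matrices. Since $\mathbf{K}^\dag$ (and hence $A_{cl}^\dag$) is fixed and the coefficients appearing in the equations do not depend on $\theta$, the Lyapunov-type equation together with the relation $R_{ii}K_i^\dag = B_i' P_i$ is linear in the entries of $\theta$. The remaining constraints state that $Q_i$, $P_i$, and the off-diagonal $R_{ij}$ lie in the positive semidefinite cone, while each $R_{ii}$ lies in the positive definite cone, both of which are convex cones in the ambient space of symmetric matrices.

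For part 1, given $\theta \in \Theta_{\mathbf{K}^\dag}$ and $\alpha > 0$, I would check that $\alpha\theta$ satisfies every constraint. Each equality scales by the common factor $\alpha$ on both sides, so both sides remain equal (to zero for the Lyapunov-type equation, and to $\alpha B_i'P_i$ for the gain relation). The PSD cone is closed under multiplication by nonnegative scalars, and the PD cone is closed under multiplication by strictly positive scalars, so every semidefiniteness or definiteness constraint persists. For part 2, I would take $\theta_1, \theta_2 \in \Theta_{\mathbf{K}^\dag}$ and $\lambda \in [0,1]$, form $\theta = \lambda\theta_1 + (1-\lambda)\theta_2$, and verify the same constraints. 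The equalities hold by linearity, since the $\lambda$-weighted combination of the equation for $\theta_1$ and the $(1-\lambda)$-weighted combination for $\theta_2$ yields the equation for $\theta$. The PSD cone is convex, so the PSD components of $\theta$ remain PSD; the PD cone, being the interior of the PSD cone, is itself convex, so each $R_{ii}$ component of $\theta$ remains positive definite.

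There is essentially no obstacle here beyond careful bookkeeping of which cone each variable lies in. The main subtlety worth flagging is the strict inequality $R_{ii} \succ 0$: part 1 requires $\alpha > 0$ rather than $\alpha \geq 0$ precisely so that $\alpha R_{ii}$ remains positive definite, and part 2 relies on the PD cone being closed under convex combinations with weights $\lambda, 1-\lambda \geq 0$ summing to one.
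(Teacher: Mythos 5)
Your proof is correct, and since the paper omits a proof of this lemma entirely, your argument --- linearity and homogeneity of the two equality constraints in $\theta$ combined with the fact that the positive semidefinite and positive definite cones are convex and closed under positive scaling --- is exactly the standard reasoning the authors evidently regarded as immediate. Your flag about needing $\alpha>0$ (not merely $\alpha\geq 0$) to preserve $R_{ii}\succ 0$ is the only subtlety, and you handle it correctly.
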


\begin{remark}
The inverse problem is relevant to many application domains such as mechanism design \cite{ratliff2012pricing,zhang2021differential}, adversarial manipulation \cite{huang2019deceptive,huang2022reinforcement}, apprentice learning \cite{lian2021inverse}. Their problem formulations usually center around the feasibility problem of (\ref{Eq:FeasibilitySet}). The following are several examples.

\textbf{Mechanism Design:} Suppose that the leader aims to design the cost parameters such that the associated Nash equilibrium achieves a value close to the social welfare. Then, the leader's problem can be formulated as:
\begin{equation}\label{Eq:MechanismDesign}
\begin{aligned}
&\min_{\mathbf{K}^\dag}\ \ \ \sum_{i=1}^N J_i(\mathbf{K}^\dag,x_0) - C_o^*\\
&s.t.\ \ \ \textrm{(\ref{Eq:FeasibilitySet}) is feasible for every $i$},
\end{aligned}
\end{equation}
where $C_o^*$ is defined as the value of the optimal control problem: $\min_{\mathbf{K}} \sum_{i=1}^N J_i(\mathbf{K},x_0)$.

\textbf{Adversarial Manipulation:} Consider the leader as an adversary who aims to implant a nefarious policy $\mathbf{K}^\dag$ through manipulating the cost parameters and have the manipulated cost parameters stay as close as possible to the true cost parameters. Then the problem can formulated as
$$
\begin{aligned}
&\min_{\{Q_i,P_i\}_{i\in\mathcal{N}},\{R_{ij}\}_{i,j\in\mathcal{N}}} \sum_{i} \Vert Q_i - Q_i^o \Vert + \sum_{i,j} \Vert R_{ij} - R^o_{ij}\Vert\\
&\ \ \ \ \ \ \ \ s.t.\ \ \ \ \ \ \ \ \ \ \  \textrm{(\ref{Eq:FeasibilitySet}) for every }i\in\mathcal{N}.
\end{aligned}
$$
The solution of such optimization problem gives an attack strategy in reinforcement learning to manipulate the learned strategy to the desired strategy $\mathbf{K}^\dag$. Such an attack strategy is effective especially when the cost parameters need to be estimated using cost data \cite{huang2019deceptive,huang2022reinforcement,ma2019policy}.

\textbf{Multi-Agent Apprentice Learning:} The leader has a sampled (noisy) demonstrations from selfish experts who play Nash. The goal is to find the Nash strategies directly from the sampled demonstrations $(\hat{x}[1],\hat{x}[2], \cdots, \hat{x}[Z])$ and $(\hat{u}_i[1],\hat{u}_i[2], \cdots, \hat{u}_i[Z])_{i\in\mathcal{N}}$, where $\hat{x}[z]=x(z\cdot\Delta t)+\eta_x$ and $\hat{u}_i[z]=u_i(z\cdot \Delta t)+\eta_{u_i}$. Here, $\Delta t$ is the sampling period, and $\eta_x$ and $\eta_{u_i}$ are the noise induced from observations. Then we can formulate the multi-agent apprentice learning problem as 
$$
\begin{aligned}
&\min_{\mathbf{K}^\dag}\ \ \ \sum_{i=1}^N\sum_{z=1}^Z\Vert K^\dag_i \hat{u}_i[z] - \hat{x}[z] \Vert\\
&s.t.\ \ \ \ \ \textrm{(\ref{Eq:FeasibilitySet}) is feasible for every }i\in\mathcal{N}
\end{aligned}
$$
\end{remark}

To solve these inverse problems, the leader needs to numerically solve the convex feasibility problem (\ref{Eq:FeasibilitySet}) to see whether there exists $\gamma_0\in\Gamma_0$ such that $\mathbf{K}^\dag$ is the Nash equilibrium of the followers' game. Apart from the numerical computation, there is also redundancy in (\ref{Eq:FeasibilitySet}) that requires extra effort to solve the inverse problem.

In Kalman's seminal work on inverse optimal control, he developed an optimality condition in the frequency-domain to answer the question when a given strategy is optimal for a given linear system. He developed the so-called ``circle criterion'' or ``return difference condition'' \cite{kalman1964when} which allows deciding whether a strategy is optimal for some cost parameters without solving the convex feasibility problem for the inverse optimal control problem. The criterion is developed for scalar optimal control. Researchers have extended this result to both discrete-time and continuous-time optimal control \cite{fujii1984complete} and \cite{sugimoto1988solution}. In view of their results, we develop such conditions for a multi-player non-cooperative differential game, which, to the best our knowledge, has not been studied previously.

To facilitate later dicussion, define 
$$
\begin{aligned}
\tilde{A}^\dag_i &= A - \sum_{j\neq i} B_j K_j^\dag,\\
\tilde{Q}^\dag_i &= Q_i + \sum_{j\neq i} {K^\dag_j}' R_{ij} K_j^\dag .
\end{aligned}
$$

The first constraint of (\ref{Eq:FeasibilitySet}) can be reconstructed using $\tilde{A}_i^\dag$ and $\tilde{Q}_i^\dag$:
\begin{equation}\label{Eq:ReconsRiccati}
    \tilde{Q}_i^\dag + P_i A_i^\dag + {A_i^\dag}'P_i - P_i B_i R_{ii}^{-1} B_i'P_i =0.
\end{equation}

\subsection{Cases when $R_{ii}=I$ and $R_{ij}=0$}\label{Eq:CaseR=I}

Suppose that the leader only has access to the costs associated with the shared state, i.e., the leader can only alter the cost parameters $\{Q_i\}_{i\in\mathcal{N}}$. Without loss of generality, we let $R_{ii}=I_{m_i}$ and $R_{ij} =0$ for $j\neq i$.


By Assumption \ref{Assum:StratStable}, we know that for every $i\in\mathcal{N}$, $(\tilde{A}_i^\dag, B_i)$ is stabilizable. For the system $(\tilde{A}_i^\dag, B_i)$ for each $i\in\mathcal{N}$, let's consider the following pair of right-coprime polynomial matrices $(S_i(s),D_i(s))$:
\begin{equation}\label{Eq:CoprimeFac}
(sI_n - \tilde{A}_i^\dag)^{-1}B_i = S_i(s) D_i(s)^{-1},
\end{equation}
where $D_i(s)$ is column reduced\footnote{The definition of column reduction is omitted due to space limitation. Readers can refer to \cite[Definition~ 7.5]{chen1999linear}.}. Follower $i$'s feedback $u_i = K_i^\dag x_i$ converts the system $(\tilde{A}_i^\dag, B_i)$ to $(A_{cl}^\dag, B_i)$. The latter induces a right-coprime factorization:
$$
(sI_n -{A}^\dag_{cl})^{-1} B_i = S_i(s) \tilde{D}_{i}(s)^{-1},
$$
where $\tilde{D}_i(s) = D_i(s) + K_i S_i(s)$.

\begin{proposition}\label{Prop:KuceraIdentity}
Let $R_{ii}=I_{m_i}$ and $R_{ij}=0$ for all $i,j\in \mathcal{N},j\neq i$ and $\mathbf{K}^\dag$ satisfy Assumption \ref{Assum:StratStable}. The constraints (\ref{Eq:FeasibilitySet}) are feasible if and only if the following identity holds:
\begin{equation}\label{Eq:KuceraIdentity}
\tilde{D}_i'(-s) \tilde{D}_i(s)  =  D_i'(-s)D_i(s) + S_i'(-s)Q_iS_i(s),
\end{equation}
for every $i\in\mathcal{N}$.
\end{proposition}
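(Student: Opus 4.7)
The plan is to decouple the multi-player feasibility system into $N$ independent inverse linear-quadratic regulator problems, and then translate each resulting state-space Riccati condition into a frequency-domain polynomial identity via the right-coprime factorization introduced in (\ref{Eq:CoprimeFac}).

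First, I would exploit the assumption $R_{ii}=I_{m_i}$, $R_{ij}=0$ for $j\neq i$ to simplify (\ref{Eq:FeasibilitySet}): one gets $\tilde Q_i^\dagger = Q_i$, the second constraint collapses to $K_i^\dagger = B_i' P_i$, and the problems across $i$ become independent. For each player $i$, feasibility of (\ref{Eq:FeasibilitySet}) is equivalent to finding $P_i,Q_i\succeq 0$ with
$$Q_i + P_i \tilde{A}_i^\dagger + (\tilde{A}_i^\dagger)' P_i - P_i B_i B_i' P_i = 0, \qquad B_i' P_i = K_i^\dagger,$$
i.e., a standard single-player inverse LQR problem on the system $(\tilde A_i^\dagger,B_i)$ with $R=I$, for which Assumption \ref{Assum:StratStable} guarantees stabilizability.

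For the forward direction, I would take the Riccati above, insert $\pm s P_i$, and then sandwich the identity between $B_i'(-sI-(\tilde{A}_i^\dagger)')^{-1}$ on the left and $(sI-\tilde{A}_i^\dagger)^{-1}B_i$ on the right. Substituting $K_i^\dagger = B_i' P_i$ and simplifying produces the return difference equality
$$I + G_i(-s)' Q_i G_i(s) = \bigl[I + K_i^\dagger G_i(-s)\bigr]' \bigl[I + K_i^\dagger G_i(s)\bigr],$$
where $G_i(s)\coloneqq(sI-\tilde{A}_i^\dagger)^{-1}B_i$. Plugging in the coprime factorization $G_i(s)=S_i(s)D_i(s)^{-1}$, noting that $I+K_i^\dagger G_i(s)=\tilde{D}_i(s)D_i(s)^{-1}$ by definition of $\tilde D_i$, and finally multiplying both sides on the left by $D_i(-s)'$ and on the right by $D_i(s)$ clears the rational denominators and yields (\ref{Eq:KuceraIdentity}).

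For the converse, I would reverse the coprime-factorization algebra to turn (\ref{Eq:KuceraIdentity}) back into the return difference identity on $G_i$, and then invoke the classical inverse-LQR existence theorem of Kalman \cite{kalman1964when} (and its multivariable extensions in \cite{fujii1984complete,sugimoto1988solution}) to produce a stabilizing $P_i\succeq 0$ solving the Riccati with $B_i'P_i=K_i^\dagger$. I expect the main obstacle to be this reverse step: it requires a spectral factorization argument to pass from the polynomial identity (\ref{Eq:KuceraIdentity}) back to a state-space realization whose stabilizing feedback is exactly the prescribed $K_i^\dagger$, rather than some other factor of the return difference. I plan to pin this down by exploiting the right-coprimeness of $(S_i,D_i)$ and the column-reducedness of $D_i(s)$, which fix $\tilde D_i(s)=D_i(s)+K_i^\dagger S_i(s)$ as the unique Hurwitz factor normalized by $\tilde D_i(s) D_i(s)^{-1}\to I$ as $s\to\infty$; the remaining positivity of $Q_i$ is read off directly from the left-hand side of (\ref{Eq:KuceraIdentity}) evaluated on the imaginary axis.
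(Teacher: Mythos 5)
Your forward direction is essentially the paper's argument: insert $\pm sP_i$ into the decoupled Riccati equation, sandwich between the resolvents, substitute $B_i'P_i=K_i^\dagger$ and the coprime factorization, and clear denominators with $D_i'(-s)(\cdot)D_i(s)$ --- the paper does exactly this, just phrased as ``add $D_i'(-s)R_{ii}D_i(s)$ to both sides'' rather than via the return difference. Your converse, however, takes a genuinely different route. The paper constructs $P_i\succeq 0$ directly from the Lyapunov equation $P_iA_{cl}^\dagger+{A_{cl}^\dagger}'P_i=-\tilde{Q}_i^\dagger-{K_i^\dagger}'K_i^\dagger$ (always solvable since $A_{cl}^\dagger$ is Hurwitz), and then shows $F(s)=(B_i'P_i-K_i^\dagger)S_i(s)\tilde{D}_i^{-1}(s)$ satisfies $F(s)=-F'(-s)$, so that pole separation between $\mathbb{C}_-$ and $\mathbb{C}_+$ forces $F\equiv 0$; you instead propose to solve the forward LQR problem for the prescribed $Q_i$ and identify its closed-loop denominator with $\tilde{D}_i$ by uniqueness of the Hurwitz spectral factor normalized at infinity. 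Your route can be made to work, but be careful at the step you yourself flag: the ``classical inverse-LQR existence theorems'' give optimality for \emph{some} $Q$, not a stabilizing $P_i\succeq 0$ for the \emph{given} $Q_i$. To get that, you must observe that the Kalman equation itself forces $\det[D_i'(-s)D_i(s)+S_i'(-s)Q_iS_i(s)]$ to be nonzero on $j\mathbb{R}$ (its left-hand side is $\tilde{D}_i'(-s)\tilde{D}_i(s)$ with $\tilde{D}_i$ Hurwitz), which together with stabilizability of $(\tilde{A}_i^\dagger,B_i)$ yields the stabilizing PSD solution and makes its closed-loop denominator a Hurwitz spectral factor; the normalization $\tilde{D}_iD_i^{-1}\to I$ then pins down the orthogonal ambiguity. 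Note also that both routes terminate in the same final step: from $(B_i'P_i-K_i^\dagger)S_i(s)\equiv 0$ you still need the full-column-rank structure of $S_i(s)$ (the paper cites the Wolovich form (\ref{Eq:SForm})) to conclude $B_i'P_i=K_i^\dagger$, so you cannot avoid that ingredient. The paper's argument is the more self-contained of the two; yours buys a cleaner conceptual link to classical spectral factorization at the cost of importing existence results for the ARE.
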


\begin{proof}
Adding and subtracting $sP_i$ from (\ref{Eq:ReconsRiccati}) yields 
$$
(-sI_n -{A_i^\dag}')P_i + P_i(sI_n- A_i^\dag) =  \tilde{Q}_i^\dag - P_iB_i R_{ii}^{-1}B_i'P_i.
$$
Then, pre-multiplying it by $B_i'(-sI_n - {A_i^\dag}')^{-1}$, post-multiplying it by $(sI_n -A_i^\dag)^{-1}B_i$, and using the second equation of (\ref{Eq:FeasibilitySet}) and (\ref{Eq:CoprimeFac}), we obtain 
$$
\begin{aligned}
&D'_i(-s)R_{ii} K_i^\dag S_i(s) + S_i'(s) {K_i^\dag}' R_{ii}  D_i(s)\\
&= S_i'(-s)\left[\tilde{Q}_i^\dag -P_iB_i R_{ii}^{-1}B_i'P_i\right]S_i(s).
\end{aligned}
$$
Adding $D_i'(-s) R_{ii} D_i(s)$ to both sides of the equation above yields
\begin{equation}\label{Eq:GenricKuceraIdentity}
\begin{aligned}
&\left[ D_i'(-s) + S_i'(-s){K_i^\dag}' \right] R_{ii} \left[D_i(s) + K_i^\dag S_i(s) \right] \\
&=S_i'(-s) \tilde{Q}^\dag_i S_i(s) + D_i'(-s) R_{ii} D_i(s).
\end{aligned}
\end{equation}
When $R_{ii}=I_{m_i}$ and $R_{ij}=0$ for $i,j\in\mathcal{N},j\neq i$, the above identity becomes (\ref{Eq:KuceraIdentity}).

Conversely, since $A_{cl}^\dag$ is stable by Assumption \ref{Assum:Stabilizable}, and $\tilde{Q}_i^\dag + {K_i^\dag}' R_{ii} K_i^\dag$ is positive semi-definite, there exists a solution $P_i\succeq 0$ to the Lyapunov function
\begin{equation}\label{Eq:LyapunovFun}
P_iA_{cl}^\dag + {A_{cl}^\dag}'P_i = -\tilde{Q}_i^\dag  - {K_i^\dag}' R_{ii} K_i^\dag.
\end{equation}
Note that (\ref{Eq:ReconsRiccati}) can be written as
$$
\tilde{Q}_i^\dag + P_iA_{cl}^\dag + {A_{cl}^\dag}'P_i= P_iB_i R_{ii}B_i'P_i - P_iB_iK_i^\dag - {K_i^\dag}'B_i'P_i.
$$
Hence, it suffices to show $R_{ii} K_i^\dag = B_i'P_i$.
Rewrite (\ref{Eq:LyapunovFun}) as
$$
P_i(sI_n-A_{cl}^\dag) + (-sI_n-{A_{cl}^\dag}')P_i = \tilde{Q}_i^\dag  + {K_i^\dag}' R_{ii} K_i^\dag.
$$
Post- and pre-multiplying the above identity by $S_i(s)$ and $S_i'(-s)$ yields 
$$
\begin{aligned}
&S_i'(-s)P_iB_i\tilde{D}_i(s) + \tilde{D}_i'(-s) B_i'P_i S_i(s)\\
&=S'_i(-s)\left[ \tilde{Q}_i^\dag  + {K_i^\dag}' R_{ii} K_i^\dag  \right] S_i'(s).
\end{aligned}
$$
Combining the above identity and (\ref{Eq:GenricKuceraIdentity}), we obtain
$$
\begin{aligned}
&S_i'(-s)P_iB_i\tilde{D}_i(s) + \tilde{D}_i'(-s) B_i'P_i S_i(s) \\
=& 2 S_i'(s) {K_i^\dag}' R_{ii} {K_i^\dag} S_i(s) + D_i'(-s)R_{ii}K_i^\dag S_i(s) \\
&+S_i'(-s){K_i^\dag}'R_{ii}D_i(s)\\
=&\tilde{D}_i'(-s) R_{ii}{K_i^\dag}S_i(s) + S_i'(-s){K_i^\dag}'R_{ii}\tilde{D}_i(s).
\end{aligned}
$$
The above identity gives
$$
\small
\begin{aligned}
&S_i'(-s)\left[P_iB_i -{K_i^\dag}'R_{ii}\right] \tilde{D}_i(s) + \tilde{D}_i'(-s)\left[B_i'P_i -R_{ii}K_i^\dag \right] S_i(s)\\
&\equiv 0,
\end{aligned}
$$
which indicates $F(s) = -F'(-s)$ where
$$
F(s)\coloneqq (B_i'P_i -R_{ii}K_{i}^\dag)S_i(s) \tilde{D}^{-1}_i(s).
$$
Indeed, we have $F(s)=-F'(-s)\equiv 0$ due to the fact that all the poles of $F(s)$ are in $\mathbb{C}_-$ and those of $F'(-s)$ in $\mathbb{C}_+$. Hence, 
$$
(B_i'P_i -R_{ii}K_{i}^\dag)S_i(s) \equiv 0.
$$
From \cite[Theorem~4.3]{wolovich2012linear}, $S_i(s)$ can be transformed into
\begin{equation}\label{Eq:SForm}
\footnotesize
 S_i'(s)H_i' = \begin{bmatrix}
1 &  &  &  \\
s &  & \bigzero  &  \\
\vdots &  &  &  \\
s^{\sigma_{i,1}-1} &  &  &  \\
 & 1 &  &  \\
 & s &  &  \\
 & \vdots &  &  \\
 & s^{\sigma_{i,2}-1} &  &  \\
 &  & \ddots &  \\
 &  &  & 1 \\
 & \bigzero &  & s \\
 &  &  & \vdots \\
 &  &  & s^{\sigma_{i,m_i}-1} 
\end{bmatrix}  
\end{equation}
by some non-singular matrix $H_i$, where $\sigma_{i,k}$ is the column degree of the $k$-th column of $D_i(s)$.
Hence, 
$B_i'P_i -R_{ii}K_{i}^\dag$ has to vanish.
\end{proof}
Proposition \ref{Prop:KuceraIdentity} bridges state-space and frequency-domain techniques for the conditions that make $\mathbf{K}^\dag$ Nash. In general, several weights $(Q_1, \cdots,Q_N)$ exist for which the Riccati equation in (\ref{Eq:FeasibilitySet}) holds for given $\mathbf{K}^\dag$. We see that  (\ref{Eq:KuceraIdentity}) reduces this type of redundancy, which is also the original reason of using frequency-domain representations for a scalar optimal control problem by Kalman. Equation (\ref{Eq:KuceraIdentity}) is a generalization of the well-known Kalman equation (See \cite[Eq.~45]{kalman1964when}) to a dynamic game setting.
Hence, we also refer (\ref{Eq:KuceraIdentity}) to as the Kalman equation.

Now the question left is whether there exist some $Q_i\succeq 0,i\in\mathcal{N}$ such that (\ref{Eq:KuceraIdentity}) holds for all $i\in\mathcal{N}$. In (\ref{Eq:KuceraIdentity}), the difference
\begin{equation}\label{Eq:DefinePhi}
\Phi_i(s)\coloneqq \tilde{D}_i'(-s) \tilde{D}_i(s) - D_i'(-s)D_i(s),\ \ \ i\in\mathcal{N}
\end{equation}
is independent of $Q_i,i\in\mathcal{N}$ and decided by $\mathbf{K}^\dag$. Inspired from the circle criterion which is a necessary condition for a linear control to be optimal \cite{kalman1964when,fujii1984complete}, we conjecture that a necessary condition for $\mathbf{K}^\dag$ to be Nash is 
\begin{equation}\label{Eq:CircleCriterion}
\Phi_i(jw)\succeq 0\ \ \ \forall i\in\mathcal{N},\ \forall w\in\mathbb{R},
\end{equation}
where $j=\sqrt{-1}$. Now, we present a necessary and sufficient condition for $\mathbf{K}^\dag$ to be Nash under some $(Q_1,Q_2,\cdots,Q_N)$, which subsumes the game version of circle criterion (\ref{Eq:CircleCriterion}).

\begin{theorem}\label{Theo:FeasiblityCheck}
Given $\mathbf{K}^\dag$, suppose that Assumption \ref{Assum:StratStable} holds. Define $\Phi_i(s)\coloneqq \tilde{D}_i'(-s) \tilde{D}_i(s) - D_i'(-s)D_i(s),\  i\in\mathcal{N}$. If $\Phi_i(s)$ has polynomial rank $p_i<m_i$, there exists an $m_i\times m_i$ unimodular matrix\footnote{Readers can refer to \cite[Definition~7.2]{chen1999linear} for the definition of unimodular matrix and \cite[Definition~7.2]{chen1999linear} and \cite[Definition~7.4]{chen1999linear} for the definition of polynomial degree.} $L_i(s)$ that transforms $\Phi_i(s)$ into
\begin{equation}\label{Eq:TransformPhi}
\Phi_i(s)L_i(s) = \begin{bmatrix}
\tilde{\Phi}_i(s) & 0
\end{bmatrix},
\end{equation}
where $\tilde{\Phi}_i$ is an $m_i\times p_i$ polynomial matrix with rank $p_i$. Then $\mathbf{K}^\dag$ is Nash if and only if both conditions below hold:
\begin{enumerate}[(a)]
    \item The circle criterion holds (\ref{Eq:CircleCriterion});
    \item There does not exist an $s\in\mathbb{C}_+$ and a non-zero $v_i\in\mathbb{R}^{m_i}$ such that
    $$
    D_i(s)L_i(s)v_i =0,\textrm{and }v_{i,1}=\cdots=v_{i,p_i} =0
    $$
    for all $i\in\mathcal{N}$. Here, $v_{i,k}$ is the $k$-th element of vector $v_i$.
\end{enumerate}
\end{theorem}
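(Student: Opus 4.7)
The plan is to leverage Proposition \ref{Prop:KuceraIdentity} to reduce the Nash feasibility of $\mathbf{K}^\dag$ to a per-player frequency-domain question: for each $i\in\mathcal{N}$, does there exist $Q_i\succeq 0$ satisfying $\Phi_i(s)=S_i'(-s)Q_iS_i(s)$? Since this identity decouples across players, I would fix an arbitrary $i$ and establish the equivalence of the existence of such a $Q_i$ with the combination of (a) and (b) applied to that player; the theorem then follows by intersecting the per-player conditions. This decoupling also makes precise the sense in which the multi-player structure enters only through the data $(\tilde{A}_i^\dag, B_i)$ and the coprime pair $(S_i,D_i)$, while the factorization constraint itself is single-player in form.

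Necessity of (a) is immediate: evaluating the Kalman equation at $s=jw$ and using $S_i'(-jw)=S_i^*(jw)$ gives $\Phi_i(jw)=S_i^*(jw)Q_iS_i(jw)\succeq 0$ whenever $Q_i\succeq 0$. For necessity of (b) I would argue by contradiction. Partitioning $L_i(s)=[L_i^{(1)}(s),L_i^{(2)}(s)]$ consistently with (\ref{Eq:TransformPhi}) and post-multiplying the Kalman equation by $L_i^{(2)}(s)$ yields $S_i'(-s)Q_iS_i(s)L_i^{(2)}(s)\equiv 0$. Writing $Q_i=C_i'C_i$, contracting with $L_i^{(2)}(-s)'$ on the left, and evaluating on the imaginary axis produces $\|C_iS_i(jw)L_i^{(2)}(jw)\|_F^2=0$, which by analytic continuation gives $Q_iS_i(s)L_i^{(2)}(s)\equiv 0$. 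Supposing the hypothesized $s_0\in\mathbb{C}_+$ and $v_i$ exist, evaluating the Kalman equation at $s_0$, substituting $D_i(s_0)L_i(s_0)v_i=0$ and $\tilde{D}_i=D_i+K_i^\dag S_i$, and using the stability of $A_{cl}^\dag$ (so that $\tilde{D}_i$ has no zeros in $\mathbb{C}_+$) should yield the contradiction via the Popov-like canonical form (\ref{Eq:SForm}) of $S_i$ and the coprimeness of $(S_i,D_i)$.

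For sufficiency I would construct $Q_i$ via spectral factorization. Condition (a) provides $\Phi_i(jw)\succeq 0$, so the standard matrix spectral factorization yields $\Phi_i(s)=M_i'(-s)M_i(s)$ with $M_i(s)$ a minimum-phase polynomial matrix of size $p_i\times m_i$. The core step is to express $M_i(s)=C_iS_i(s)$ for a constant $C_i\in\mathbb{R}^{p_i\times n}$, whence $Q_i:=C_i'C_i\succeq 0$ satisfies the Kalman equation by construction. Condition (b) is exactly what makes this step go through: the absence of right null vectors of $D_i(s)L_i^{(2)}(s)$ in $\mathbb{C}_+$ ensures that the rational factor $M_i(s)D_i^{-1}(s)$ arising when one realizes $M_i$ through the coprime factorization is stable, proper, and, after absorbing the unimodular factor $L_i$, reduces to a constant.

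The main obstacle will be this final step of the sufficiency direction, namely rigorously producing the constant matrix $C_i$ with $M_i(s)=C_iS_i(s)$. This is the multi-player counterpart of the classical MIMO inverse-LQR arguments in \cite{fujii1984complete,sugimoto1988solution}, and the key technicality lies in exploiting the explicit column-degree structure (\ref{Eq:SForm}) of $S_i$ together with the coprimeness of $(S_i,D_i)$ to rule out any non-constant rational quotient; condition (b) eliminates the potentially non-minimum-phase obstructions that would otherwise leave rational poles in $\mathbb{C}_+$.
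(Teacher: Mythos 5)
Your skeleton coincides with the paper's: both reduce Nash-ness to the per-player Kalman equation (\ref{Eq:KuceraIdentity}) via Proposition \ref{Prop:KuceraIdentity}, obtain necessity of (a) by evaluation on the imaginary axis, obtain necessity of (b) by showing that the trailing block $Q_i^{1/2}S_i(s)L_i^{(2)}(s)$ vanishes identically, and obtain sufficiency by spectral-factorizing $\Phi_i$ (Lemma \ref{Lemma:SpectralFactorization}) and realizing the factor as $C_iS_i(s)$ with $Q_i=C_i'C_i$. Two of your steps, however, do not close as written. In the necessity of (b), your contradiction leans on ``$\tilde{D}_i$ has no zeros in $\mathbb{C}_+$.'' What you actually know at the bad point is $\Phi_i(s_0)L_i(s_0)v_i=\tilde{D}_i'(-s_0)\tilde{D}_i(s_0)L_i(s_0)v_i=0$ (using $D_i(s_0)L_i(s_0)v_i=0$), so the matrix you would need to cancel is $\tilde{D}_i'(-s_0)$, not $\tilde{D}_i(s_0)$; since $-s_0\in\mathbb{C}_-$ may well be an eigenvalue of $A_{cl}^\dag$, that factor can be singular and the cancellation fails. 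The paper's route avoids this: it combines $Q_i^{1/2}S_i(s_0)L_i(s_0)v_i=0$ with $D_i(s_0)L_i(s_0)v_i=0$ and right-coprimeness of $(S_i,D_i)$ to exhibit an unstable unobservable mode, i.e., loss of detectability of $(\tilde{A}_i^\dag,Q_i^{1/2})$, and then invokes fact 2.4 of \cite{fujii1984complete} to rule out optimality of $K_i^\dag$ for $(\tilde{A}_i^\dag,B_i)$. You need that detectability step (or an equivalent) made explicit.

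Second, you assign condition (b) the job of forcing the spectral factor to have the form $M_i(s)=C_iS_i(s)$ with $C_i$ constant. That representation is a column-degree fact, not a zero-location fact: $D_i$ and $\tilde{D}_i$ are column reduced with identical column degrees and identical highest-column-degree coefficients (because $K_i^\dag S_i$ has strictly lower column degrees), so $\Phi_i$ has entry degrees strictly below $\sigma_{i,k}+\sigma_{i,l}$, the factor can be taken with column degrees at most $\sigma_{i,k}-1$, and the canonical form (\ref{Eq:SForm}) then produces the constant $C_i$ — all of this is available under (a) alone. What (b) actually buys in the sufficiency direction is the rank condition (\ref{Eq:RankCriterion}), i.e., detectability of $(\tilde{A}_i^\dag,C_i)$, which is what licenses the conclusion that $Q_i=C_i'C_i$ renders $K_i^\dag$ optimal and hence that $\Theta_{\mathbf{K}^\dag}$ is nonempty. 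As your argument stands, condition (b) does no work in the sufficiency direction, which would contradict the necessity of (b) that you also assert; separating the two roles (degree bound versus detectability) is the fix.
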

\begin{proof}
The first statement about the existence of $N_i$ is true by Lemma \ref{Lemma:SpectralFactorization}. Let $N_i(s)$ be such a $p_i\times m_i$ polynomial matrix as in Lemma \ref{Lemma:SpectralFactorization}. Then, there exists a unimodular matrix $L_i(s)$, as is shown in Lemma \ref{Lemma:SpectralFactorization}, such that
$$
N_i(s) L_i(s) = \begin{bmatrix} \hat{N}_i(s) & 0\end{bmatrix},
$$
where $\mathrm{det}\ \hat{N}_i(s)\neq 0$ for $\mathfrak{Re}(s)>0$. Suppose (b) does not. Then, there exists an $s\in\mathbb{C}_+$ and a $v_i\in\mathbb{R}^{m_i}$ such that $D_i(s)L_i(s)v_i =0$ and $v_{i,1}=\cdots=v_{i,p}=0$. By (\ref{Eq:TransformPhi}), $\Phi_i(s)L_i(s)v_i =0$. Suppose that $\mathbf{K}^\dagger$ is Nash under some $Q_i,i\in\mathcal{N}$. By (\ref{Eq:KuceraIdentity}) and defining $\bar{N}_i(s)\coloneqq Q_i^{1/2}S_i(s)$, we have
$$
\Phi_i(s) = \bar{N}_i'(-s)\bar{N}_i(s).
$$
From (\ref{Eq:SpectralFactorizationPhi}), we arrive at
$$
L_i'(-s)\bar{N}_i'(-s)\bar{N}_i(s)L_i(s) = \begin{bmatrix}
\hat{N}_i'(-s)\hat{N}_i(s) & 0\\
0 & 0
\end{bmatrix}.
$$
Factorizing $\bar{N}_i(s)L_i(s)$ into $[\bar{N}_{i,1}(s)\ \bar{N}_{i,2}(s)]$ with $\bar{N}_{i,1}(s)$ having $p_i$ columns. Hence, 
$\bar{N}_{i,2}'(-s)\bar{N}_{i,2}(s)\equiv 0$. Then, $\bar{N}_{i,2}^*(jw)\bar{N}_{i,2}(jw)=0$ for an arbitrary real number $w$. Here, superscript $*$ denotes the conjugate transpose. Hence, $\bar{N}_{i,2}(s)=0$ on $\mathbb{C}$. Since $\Phi_i(s)L_i(s)v_i =0$, $\bar{N}_i(s)L_i(s)v_i=0$, which implies that $(\tilde{A}^\dag_i,Q_i)$ is not detectable. By fact 2.4 of \cite{fujii1984complete}, $K^\dagger_i$ cannot be optimal for system $(\tilde{A}_i^\dagger, B_i)$. Hence, $\mathbf{K}^
\dagger$ cannot be Nash, which is a contradiction. 

Suppose that (b) holds. Then, it holds that
\begin{equation}\label{Eq:RankCriterion}
\mathrm{rank}\ \begin{bmatrix}
N_i(s)\\
D_i(s)
\end{bmatrix}=m_i,\ \ \ \forall s\in\mathbb{C}_+.
\end{equation}
Otherwise, letting $u_i=L_i(s)v_i$, we have $N_i(s)u_i = D_i(s)u_i =0$ for some $s\in\mathbb{C}_+$, which contradicts (\ref{Eq:RankCriterion}). For each $i$, since $(\ref{Eq:RankCriterion})$ holds, $(\tilde{A}_i^\dag,C_i)$ is detectable for the matrix characterized by $N_i(s)=C_iS_i(s)$ and (\ref{Eq:KuceraIdentity}) holds for $Q_i = C_i'C_i$. Then from Proposition \ref{Prop:KuceraIdentity}, (\ref{Eq:FeasibilitySet}) is feasible. Hence, $\mathbf{K}^\dag$ is Nash for some $\{Q_i\}_{i\in\mathcal{N}}$ by Proposition \ref{Prop:NonEmptySet}.
\end{proof}

\begin{remark}
Theorem 2 gives an analytical way to check whether the leader's problem is feasible. We can do it without numerically solving the convex feasibility problem (\ref{Eq:FeasibilitySet}).
Consider a two-player linear-quadratic dynamic game with 
$$
A=\begin{bmatrix}
1 & 0 & 1\\
0 & 0 & 1\\
0 & 1 & 0
\end{bmatrix},\ B_1 = \begin{bmatrix}
1 & 0\\
0 & 1\\
0 & 0
\end{bmatrix},\ B_2 = \begin{bmatrix}
1\\
0\\
0
\end{bmatrix}.
$$
Here, $(A,[B_1, B_2])$ is stabilizable. Suppose that the leader promotes the strategy $\mathbf{K}^\dag =(K_1^\dagger, K_2^\dagger)$ by designing $\{Q_i\}_{i=1,2}$. Suppose that
$$
K^\dag_1 = \begin{bmatrix}
1 & 0 & 1\\
0 & 1+\sqrt{2} & 1+\sqrt{2}
\end{bmatrix},\ \ K^\dag_2 = \begin{bmatrix}
1 & 0 & 0\\
\end{bmatrix}.
$$
Note that $\tilde{A}_1^\dag = A -B_2 K^\dag_2$. We have $(sI-\tilde{A}_1^\dag)^{-1}B_1 = S_1(s)D_1(s)^{-1}$, where
$$
S_1(s) = 
\begin{bmatrix}
1 & 0\\
0 & s\\
0 & 1
\end{bmatrix},\ \ D_1 = \begin{bmatrix}
s & -1\\
0 & s^2-1
\end{bmatrix}.
$$
Note that 
$$
\tilde{D}_1(s) = D_1(s)+K_1S_1(s) = \begin{bmatrix}
s+1 & 0\\
0 & (s+1)(s+\sqrt{2}).
\end{bmatrix}
$$
Then, using (\ref{Eq:DefinePhi}), we arrive at
$$
\Phi_1(jw) = \begin{bmatrix}
1 & -jw\\
jw & w^2
\end{bmatrix},
$$
and $\mathrm{det}\ \Phi_1(jw) =2w^2\geq 0$ for all $w\in\mathbb{R}$. Hence, $\Phi_1(jw)\succeq 0$ for all $w\in\mathbb{R}$, meaning that condition (a) in Theorem \ref{Theo:FeasiblityCheck} holds. A unimodular polynomial matrix $L_i(s)$ such as in (\ref{Eq:TransformPhi}) can be found as
$$
L_1 = \begin{bmatrix}
1 & s\\
- & s
\end{bmatrix}.
$$
Let $v_1 = [0\ 1]'$. Then, $D_1(s)L_1(s)v_1 = [s^2-1\ s^2-1]$, which vanishes at $s=1\in\mathbb{C}_+$. Conditions (b) is violated; hence, there are no cost parameters that make $\mathbf{K}^\dag$ a Nash equilibrium.

\end{remark}

\subsection{The general case}

In Section \ref{Eq:CaseR=I}, the leader only has access to the costs associated with the shared state, i.e., $\{Q_i\}_{i\in\mathcal{N}}$. Now consider the general case where the leader can manipulate not only $\{Q_i\}_{i\in\mathcal{N}}$ but also $\{R_{ij}\}_{i,j\in\mathcal{N}}$. In view of the proof of Proposition \ref{Prop:KuceraIdentity}, we can derive the following corollary:

\begin{corollary}\label{Coro:KuceraIdentityGeneral}
Let $\mathbf{K}^\dagger$ satisfy Assumption  \ref{Assum:StratStable}. The constraints (\ref{Eq:FeasibilitySet}) are satisfied if and only if the following equality holds
\begin{equation}\label{Eq:KuceraIdentityGeneral}
\tilde{D}_i'(-s) R_{ii} \tilde{D}_i(s)  =  D_i'(-s)R_{ii}D_i(s) + S_i'(-s)\tilde{Q}^\dagger_iS_i(s),
\end{equation}
for every $i\in\mathcal{N}$.
\end{corollary}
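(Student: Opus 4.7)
The plan is to reuse the derivation of Proposition \ref{Prop:KuceraIdentity} with essentially no modification, since the intermediate identity (\ref{Eq:GenricKuceraIdentity}) was already obtained in full generality---the specialization $R_{ii}=I_{m_i}$, $R_{ij}=0$ was only invoked at the very last step to reduce (\ref{Eq:GenricKuceraIdentity}) to (\ref{Eq:KuceraIdentity}). For necessity, I would start from the reconstructed Riccati equation (\ref{Eq:ReconsRiccati}), add and subtract $sP_i$, pre-multiply by $B_i'(-sI_n-{\tilde{A}_i^\dag}')^{-1}$ and post-multiply by $(sI_n-\tilde{A}_i^\dag)^{-1}B_i$, substitute the coprime factorization (\ref{Eq:CoprimeFac}) together with $R_{ii}K_i^\dag=B_i'P_i$, and finally add $D_i'(-s)R_{ii}D_i(s)$ to both sides. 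This yields (\ref{Eq:GenricKuceraIdentity}), and the definition $\tilde{D}_i(s)=D_i(s)+K_i^\dag S_i(s)$ makes the left-hand side coincide with $\tilde{D}_i'(-s)R_{ii}\tilde{D}_i(s)$, giving (\ref{Eq:KuceraIdentityGeneral}).

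For sufficiency, I would mirror the converse portion of the proof of Proposition \ref{Prop:KuceraIdentity}. Since $A_{cl}^\dag$ is Hurwitz by Assumption \ref{Assum:StratStable} and the matrix $\tilde{Q}_i^\dag + {K_i^\dag}'R_{ii}K_i^\dag$ is positive semi-definite---the first summand because $Q_i\succeq 0$ and $R_{ij}\succeq 0$ for $j\neq i$, the second because $R_{ii}\succ 0$---the Lyapunov equation (\ref{Eq:LyapunovFun}) admits a unique $P_i\succeq 0$. Shifting the Lyapunov identity by $\pm sI_n$ and sandwiching it between $S_i'(-s)$ on the left and $S_i(s)$ on the right, then subtracting the hypothesized (\ref{Eq:KuceraIdentityGeneral}), one arrives at $S_i'(-s)[P_iB_i - {K_i^\dag}'R_{ii}]\tilde{D}_i(s) + \tilde{D}_i'(-s)[B_i'P_i - R_{ii}K_i^\dag]S_i(s)\equiv 0$. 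Defining $F(s):=(B_i'P_i-R_{ii}K_i^\dag)S_i(s)\tilde{D}_i(s)^{-1}$ gives $F(s)=-F'(-s)$; the pole-location argument (poles of $F$ lie in $\mathbb{C}_-$ while those of $F'(-s)$ lie in $\mathbb{C}_+$) forces $F\equiv 0$, and the column-reduced structure of $S_i(s)$ recorded in (\ref{Eq:SForm}) then forces $R_{ii}K_i^\dag-B_i'P_i=0$. This is precisely the second line of (\ref{Eq:FeasibilitySet}); substituting back into the Lyapunov equation recovers the first.

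The main obstacle is almost entirely bookkeeping: I must verify that the general positive-definite factor $R_{ii}$ (rather than the identity) does not disrupt (i) the well-posedness of $F(s)$ as a proper rational function, for which $R_{ii}$ need only appear as an invertible constant sandwiched in the right places, and (ii) the sign condition guaranteeing a positive semi-definite Lyapunov solution, which follows from summand-wise positivity. Both are immediate. No new structural ingredient beyond Proposition \ref{Prop:KuceraIdentity} is required---the corollary is essentially a re-reading of its proof with the simplifying substitutions $R_{ii}=I_{m_i}$, $R_{ij}=0$ undone.
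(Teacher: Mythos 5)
Your proposal is correct and matches the paper's intent exactly: the paper gives no separate proof of this corollary, stating only that it follows ``in view of the proof of Proposition~\ref{Prop:KuceraIdentity},'' because the intermediate identity (\ref{Eq:GenricKuceraIdentity}) and the entire converse argument (Lyapunov solution, the $F(s)=-F'(-s)$ pole-location step, and the structure of $S_i(s)$ in (\ref{Eq:SForm})) are already carried out with general $R_{ii}$ and $\tilde{Q}_i^\dag$. Your bookkeeping checks on the invertibility of $R_{ii}$ and the positive semi-definiteness of $\tilde{Q}_i^\dag + {K_i^\dag}'R_{ii}K_i^\dag$ are precisely the points that need verifying, and they hold as you say.
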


Equation (\ref{Eq:KuceraIdentityGeneral}) is the Kalman equation (\ref{Eq:KuceraIdentity}) under the general case. Comparing with solving (\ref{Eq:FeasibilitySet}), solving (\ref{Eq:GenricKuceraIdentity}) avoids characterizing $\{P_i\}_{i\in\mathcal{{N}}}$ and involves solving a system of linear equations with elements of $\{Q_i\}_{i\in\mathcal{N}}$ and $\{R_{ij}\}_{i,j\in\mathcal{N}}$ being the unknowns.

\begin{proposition}\label{Prop:RedundantPenality}
Suppose that there exist $\{\tilde{Q}_{i}\}_{i\in\mathcal{N}}$ and $\{\tilde{R}_{ij}\}_{i,j\in\mathcal{N}}$ with $\tilde{R}_{ij}$ being non-zero for some $i\neq j$ such that (\ref{Eq:KuceraIdentityGeneral}) holds for every $i\in\mathcal{N}$. Then, there must exist $\{\bar{Q}_i\}_{i\in\mathcal{N}}$ and $\{\bar{R}_{ij}\}_{i,j\in\mathcal{N}}$ with $\bar{R}_{ij}=0$ for all $i\neq j$ such that (\ref{Eq:KuceraIdentityGeneral}) holds for every $i\in\mathcal{N}$. 

Conversely, suppose that there exist $\{\bar{Q}_i\}_{i\in\mathcal{N}}$ with $Q_{i}\succ 0$ and $\{\bar{R}_{ij}\}_{i,j\in\mathcal{N}}$ with $\bar{R}_{ij}=0$ for all $i\neq j$. Then, there must exist $\{\tilde{Q}_{i}\}_{i\in\mathcal{N}}$ and $\{\tilde{R}_{ij}\}_{i,j\in\mathcal{N}}$ with $\tilde{R}_{ij}$ being non-zero for some $i\neq j$ such that (\ref{Eq:KuceraIdentityGeneral}) holds for every $i\in\mathcal{N}$.
\end{proposition}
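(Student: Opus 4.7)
The plan is to exploit the structural observation that the right-hand side of (\ref{Eq:KuceraIdentityGeneral}) depends on the off-diagonal weights $R_{ij}$, $j\neq i$, only through the aggregate $\tilde{Q}_i^\dagger = Q_i + \sum_{j\neq i}{K_j^\dagger}' R_{ij} K_j^\dagger$, while the left-hand side and the first term on the right depend solely on the diagonal block $R_{ii}$. Therefore, for each player $i$ one may freely redistribute weight between $Q_i$ and $\{R_{ij}\}_{j\neq i}$ as long as the pair $(R_{ii},\tilde{Q}_i^\dagger)$ is preserved, and (\ref{Eq:KuceraIdentityGeneral}) will be preserved player by player. The only thing that can fail is the positive (semi)definiteness requirement on the redistributed $Q_i$ and $R_{ij}$.

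For the forward direction, I would absorb each cross weight into the shared-state penalty by setting $\bar{R}_{ii}:=\tilde{R}_{ii}$, $\bar{R}_{ij}:=0$ for $j\neq i$, and
\begin{equation*}
\bar{Q}_i := \tilde{Q}_i + \sum_{j\neq i} {K_j^\dagger}' \tilde{R}_{ij} K_j^\dagger, \qquad i\in\mathcal{N}.
\end{equation*}
Because each summand is positive semi-definite, $\bar{Q}_i\succeq 0$, and by construction the aggregate $\tilde{Q}_i^\dagger$ associated with the new tuple coincides with the original one. The identity (\ref{Eq:KuceraIdentityGeneral}) therefore continues to hold, and the new tuple has no off-diagonal penalties.

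For the converse, I would perturb in the opposite direction. Fix any pair $(i_0,j_0)$ with $j_0\neq i_0$ and set $\tilde{R}_{i_0 j_0}:=\varepsilon I_{m_{j_0}}$ with $\varepsilon>0$ chosen small enough that
\begin{equation*}
\tilde{Q}_{i_0} := \bar{Q}_{i_0} - \varepsilon\, {K_{j_0}^\dagger}' K_{j_0}^\dagger \succeq 0;
\end{equation*}
the strict positivity hypothesis $\bar{Q}_{i_0}\succ 0$ guarantees that such an $\varepsilon$ exists. I would then put $\tilde{R}_{ij}:=0$ for every other off-diagonal pair, $\tilde{R}_{ii}:=\bar{R}_{ii}$, and $\tilde{Q}_i:=\bar{Q}_i$ for $i\neq i_0$. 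Again each $\tilde{Q}_i^\dagger$ equals $\bar{Q}_i$, so (\ref{Eq:KuceraIdentityGeneral}) persists, yet $\tilde{R}_{i_0 j_0}\neq 0$ as required.

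The main obstacle is the sign-constraint bookkeeping rather than any algebraic manipulation: the forward direction only adds positive semi-definite terms, so nothing can fail, but the converse subtracts, and the strict positivity of $\bar{Q}_i$ is precisely the slack that allows a nonzero off-diagonal weight to be introduced without violating $\tilde{Q}_i\succeq 0$. Everything else reduces to matching the aggregate $\tilde{Q}_i^\dagger$ on both sides of the redistribution.
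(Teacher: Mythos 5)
Your proof is correct and takes essentially the same approach as the paper: the forward direction absorbs each cross-penalty into $\bar{Q}_i := \tilde{Q}_i + \sum_{j\neq i}{K_j^\dagger}'\tilde{R}_{ij}K_j^\dagger$ exactly as the paper does, and the converse uses the strict positivity of $\bar{Q}_i$ as the slack that permits a nonzero off-diagonal weight. The only cosmetic difference is in the converse, where you introduce a small $\varepsilon$-perturbation that keeps the aggregate $\tilde{Q}_i^\dagger$ and $R_{ii}$ exactly fixed, while the paper instead fixes an arbitrary $\tilde{R}_{ij}\succeq 0$ and rescales the whole identity by a large $\lambda>0$; both arguments rest on the same observation that (\ref{Eq:KuceraIdentityGeneral}) depends on the off-diagonal weights only through $\tilde{Q}_i^\dagger$.
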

\begin{proof}
If under $\{\tilde{Q}_{i}\}_{i\in\mathcal{N}}$ and $\{\tilde{R}_{ij}\}_{i,j\in\mathcal{N}}$ with non-zero $\tilde{R}_{ij}$ for some $i\neq j$, (\ref{Eq:KuceraIdentityGeneral}) holds for every $i\in\mathcal{N}$. Let $\bar{Q}_i = \tilde{Q}_i + \sum_{j\neq i} {K_j^\dagger}' \tilde{R}_{ij} K_j^\dagger$ and $\bar{R}_{ii} = \tilde{R}_{ii}$, and $\bar{R}_{ij}=0$ for $i,j\in\mathcal{N},j\neq i$. Obviously, (\ref{Eq:KuceraIdentityGeneral}) holds for every $i\in\mathcal{N}$ under $\{\bar{Q}_i\}_{i\in\mathcal{N}}$ and $\{\bar{R}_{ij}\}_{i,j\in\mathcal{N}}$. Note that the positive semi-definiteness requirement also holds: $\bar{Q}_i\succeq 0$ since $\tilde{Q}_i\succeq 0$ and $\tilde{R}_{ij}\succeq 0$ for $i,j\in\mathcal{N}$ and $i\neq j$. 

If under $\{\bar{Q}_i\}_{i\in\mathcal{N}}$ with $\bar{Q}_{i}\succ 0$ and $\{\bar{R}_{ij}\}_{i,j\in\mathcal{N}}$ with $\bar{R}_{ij}=0$ for all $i\neq j$, (\ref{Eq:KuceraIdentityGeneral}) holds for every $i\in\mathcal{N}$. For $i,j\in\mathcal{N},j\neq i$, let $\tilde{R}_{ij}$ be any $m_j\times m_j$ positive semi-definite matrix. There must exist a scalar $\lambda >0$ such that $\lambda \bar{Q}_i - \sum_{j\neq i} {K_j^\dagger}' \tilde{R}_{ij}K_k^\dagger$ is positive semi-definite. Let $\tilde{Q}_i = \lambda \bar{Q}_i - \sum_{j\neq i} {K_j^\dagger}' \tilde{R}_{ij}K_k^\dagger$ and $\tilde{R}_{ii}=\lambda\bar{R}_{ii}$ for $i\in\mathcal{N}$. Then, under $\{\tilde{Q}_i\}_{i\in\mathcal{N}}$ and $\{\tilde{R}_{ij}\}_{i,j\in\mathcal{N}}$, (\ref{Eq:KuceraIdentityGeneral}) holds for every $i\in\mathcal{N}$.
\end{proof}

Note that $R_{ij}\neq 0$ for some $j\neq i$ means that players are penalized by not their own control but also other players' controls. In some applications, this mechanism can invoke the perception of unfairness among competitive players who are not willing to be penalized for the controls of their cohort. Proposition \ref{Prop:RedundantPenality} indicates that the leader can enforce the same Nash policy $\mathbf{K}^\dagger$ by applying penalties on the shared state instead of penalizing the player with prices induced other players' controls. The result, hence, can be used for mechanism design to circumvent unfairness.

Together with Corollary \ref{Coro:KuceraIdentityGeneral}, Proposition \ref{Prop:RedundantPenality} shows that if (\ref{Eq:FeasibilitySet}) is satisfied under some $\{Q_{i}\}_{i\in\mathcal{N}}$ and $\{R_{ij}\}_{i,j\in\mathcal{N}}$, where not all $R_{ij},i,j\in\mathcal{N},j\neq i$ are zeros. We can find $\{\tilde{Q}_i\}_{i\in\mathcal{N}}$ and $\{\tilde{R}_{i,j}\}_{i,j\in\mathcal{N}}$ with $\tilde{R}_{ij}=0$ for all $i,j\in\mathcal{N},j\neq i$. Hence, to see whether (\ref{Eq:FeasibilitySet}) is feasible, it is sufficient to focus on  $\{Q_i\}_{i\in\mathcal{N}}$ and $R_{ii}$ and let $R_{ij}=0$ for $j\neq i$. Then, it is sufficient to find $\{Q_i\}_{i\in\mathcal{N}}$ and $\{R_{ii}\}_{i\in\mathcal{N}}$ such that 
\begin{equation}\label{Eq:KuceraIdentityGeneral2}
\tilde{D}_i'(-s) R_{ii} \tilde{D}_i(s)  -  D_i'(-s)R_{ii}D_i(s) = S_i'(-s){Q}_iS_i(s),
\end{equation}
for every $i\in\mathcal{N}$.

Let $\tilde{D}_{R_{ii}}(s) = R^{1/2} \tilde{D}_i(s)$ and $D_{R_{ii}}(s) = R^{1/2} D_i(s)$. Applying the arguments of Theorem \ref{Theo:FeasiblityCheck} to the general case, we know if there exists $R_{ii}$ such that conditions (a) and (b) (with $\tilde{D}_i(s)$ and ${D}_i(s)$ in replace of $\tilde{D}_{R_{ii}}(s)$ and ${D}_{R_{ii}}(s)$ respectively) hold for every $i\in\mathcal{N}$, then $\mathbf{K}^\dagger$ is Nash for some $\{Q_{ii}\}_{i\in\mathcal{N}}$ and $\{R_{ii}\}_{i\in\mathcal{N}}$. Checking conditions for the general case analytically is challenging since one needs to show that there exist some $R_{ii}$ such that conditions (a) and (b) hold. However, the frequency-domain representation gives the Kalman equation (\ref{Eq:KuceraIdentityGeneral2}) for the general case. The Kalman equation (\ref{Eq:KuceraIdentityGeneral2}) provides a system of linear equations for us to solve for $Q_{i}$ and $R_{ii}$ numerically. Comparing with solving the convex feasibility problem (\ref{Eq:FeasibilitySet}), solving (\ref{Eq:KuceraIdentityGeneral2}) avoids the redundancy of the Riccati equation and $R_{ij},j\neq i$.

Indeed, we can also write the first two equalities of (\ref{Eq:FeasibilitySet}) in the form of linear equations. First let's define the Kronecker sum of an $n\times n$ matrix $N$ and an $m \times m$ matrix $M$
$$
N \oplus M = (N \otimes I_m) + (I_n \otimes M),
$$
where $\otimes$ denotes the Kronecker product \cite{bellman1997introduction}.
\begin{proposition}
The convex feasibility problem (\ref{Eq:FeasibilitySet}) has a solution if and only if the following system of linear equations has a solution for every $i\in\mathcal{N}$
\begin{equation}\label{Eq:VectorizedRiccati}
\begin{bmatrix}
I_{n^2} & {K_i^\dag}'\otimes {K_i^\dag}' & {A_{cl}^\dag}' \oplus {A_{cl}^\dag} '\\
0 & {K_i^\dag}'\otimes I_{m_i} &  -I_n\otimes B_i'
\end{bmatrix}\begin{bmatrix}
\mathrm{vec}(Q_i)\\
\mathrm{vec}(R_{ii})\\
\mathrm{vec}(P_i)\\
\end{bmatrix} = 0
\end{equation}
such that $Q_i\succeq 0$, $P_i\succeq 0$, and $R_{ii} \succ 0$.
\end{proposition}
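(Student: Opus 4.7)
The plan is to recognize (\ref{Eq:VectorizedRiccati}) as nothing more than the vectorized form of the two matrix equations in (\ref{Eq:FeasibilitySet}) once the cross-penalty terms have been stripped away by appeal to Proposition \ref{Prop:RedundantPenality}. I would first invoke Proposition \ref{Prop:RedundantPenality} to reduce the feasibility of (\ref{Eq:FeasibilitySet}) to the existence of $\{Q_i\}_{i\in\mathcal{N}}$, $\{R_{ii}\}_{i\in\mathcal{N}}$, and $\{P_i\}_{i\in\mathcal{N}}$ satisfying $Q_i\succeq 0$, $P_i\succeq 0$, $R_{ii}\succ 0$ together with the Lyapunov-type equation $Q_i+P_i A_{cl}^\dag + {A_{cl}^\dag}' P_i + {K_i^\dag}' R_{ii} K_i^\dag =0$ and the coupling equation $R_{ii}K_i^\dag = B_i' P_i$, for each $i\in\mathcal{N}$. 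The remaining cross-penalties $R_{ij}$, $j\neq i$, can be set to zero, trivially satisfying the $R_{ij}\succeq 0$ constraints of (\ref{Eq:FeasibilitySet}).

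Next, I would apply the identity $\mathrm{vec}(XYZ)=(Z'\otimes X)\mathrm{vec}(Y)$ term by term. For the Lyapunov-type equation, $P_i A_{cl}^\dag$ vectorizes to $({A_{cl}^\dag}'\otimes I_n)\mathrm{vec}(P_i)$ and ${A_{cl}^\dag}' P_i$ to $(I_n\otimes{A_{cl}^\dag}')\mathrm{vec}(P_i)$, whose sum equals $({A_{cl}^\dag}'\oplus{A_{cl}^\dag}')\mathrm{vec}(P_i)$ by the definition of the Kronecker sum. The bilinear term ${K_i^\dag}' R_{ii} K_i^\dag$ becomes $({K_i^\dag}'\otimes{K_i^\dag}')\mathrm{vec}(R_{ii})$, and $\mathrm{vec}(Q_i)$ is multiplied by $I_{n^2}$, which together yield the top block row of (\ref{Eq:VectorizedRiccati}). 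Vectorizing the coupling equation $R_{ii}K_i^\dag - B_i' P_i=0$ gives $({K_i^\dag}'\otimes I_{m_i})\mathrm{vec}(R_{ii}) - (I_n\otimes B_i')\mathrm{vec}(P_i)=0$, which is the bottom block row. Stacking the two rows reproduces the matrix equation in (\ref{Eq:VectorizedRiccati}) verbatim, with a quick dimension check confirming consistency: the blocks in the top row act on vectors of size $n^2$, $m_i^2$, $n^2$ and return an $n^2$-vector, while the bottom row returns an $nm_i$-vector.

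The converse is immediate: any solution of (\ref{Eq:VectorizedRiccati}) satisfying the positivity constraints is unvectorized to matrices $Q_i$, $R_{ii}$, $P_i$ that satisfy the Lyapunov and coupling equations; together with the choice $R_{ij}=0$ for $j\neq i$, the full constraint set (\ref{Eq:FeasibilitySet}) is then met. The argument is essentially a bookkeeping exercise; the only non-mechanical ingredient is the upfront invocation of Proposition \ref{Prop:RedundantPenality}, which is precisely what justifies the absence of $\mathrm{vec}(R_{ij})$ terms for $j\neq i$ from (\ref{Eq:VectorizedRiccati}). The main care required is keeping the order of Kronecker products, transposes, and identity factors aligned with the convention $\mathrm{vec}(XYZ)=(Z'\otimes X)\mathrm{vec}(Y)$, which I expect to be the most error-prone step rather than a conceptual obstacle.
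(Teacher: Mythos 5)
Your proof is correct and follows essentially the same route as the paper: first reduce to the case $R_{ij}=0$ for $j\neq i$ via Proposition \ref{Prop:RedundantPenality}, then vectorize the two matrix equations of (\ref{Eq:FeasibilitySet}) term by term with the identity $\mathrm{vec}(MVN)=(N'\otimes M)\mathrm{vec}(V)$ and unvectorize for the converse. As a minor point in your favor, your coefficient $({K_i^\dag}'\otimes{K_i^\dag}')$ on $\mathrm{vec}(R_{ii})$ is the correct one and matches the proposition's statement, whereas the paper's displayed intermediate step drops a transpose and writes ${K_i^\dag}'\otimes K_i^\dag$.
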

\begin{proof}
In view of Proposition \ref{Prop:RedundantPenality}, it is sufficient to discuss the case when $R_{ij=0}$ for $j\neq i$. Vectorize the first equality of (\ref{Eq:FeasibilitySet}) yields
\begin{equation}\label{Eq:VectorizedRiccati1}
\mathrm{vec}(Q_i) + \mathrm{vec}\left(P_i{A^\dag_{cl}}\right) + \mathrm{vec}\left({A^\dag_{cl}}'P_i\right)+\mathrm{vec}\left({K_i^\dag}'R_{ii}K_{i}^\dag\right) =0.
\end{equation}
Note that the following equality holds for any matrices $M$, $V$, and $N$ with proper dimensions \cite{bellman1997introduction}
\begin{equation}\label{Eq:KroneckerProperty}
\mathrm{vec}(MVN) = (N' \otimes M)\mathrm{vec}(V).
\end{equation}
Applying (\ref{Eq:KroneckerProperty}) in (\ref{Eq:VectorizedRiccati1}) yields
$$
\begin{aligned}
\mathrm{vec}(Q_i) &+ \left[ \left({A_{cl}^\dag}'\otimes I_n\right) +\left(I_n\otimes {A_{cl}^\dag}'\right) \right] \mathrm{vec}(P_i) \\
&+ \left( {K_i^\dag}'\otimes {K_i^\dag} \right) \mathrm{vec}(R_{ii}) = 0.
\end{aligned}
$$
Similarly, the second equality of (\ref{Eq:FeasibilitySet}) can be vectorized as
$$
({K_i^\dag}'\otimes I_{m_i}) \mathrm{vec}(R_{ii}) = \left( I_n\otimes B_i' \right)\mathrm{vec}(P_i).
$$
Combining the two equations above yields (\ref{Eq:VectorizedRiccati}).
\end{proof}
\begin{remark}
One can also solve (\ref{Eq:VectorizedRiccati}) instead of (\ref{Eq:FeasibilitySet}) because the equivalence between the two. We see that even if we leverages the Kalman equation to remove the redundancy in $R_{ij},j\neq i$ for (\ref{Eq:VectorizedRiccati}), solving (\ref{Eq:VectorizedRiccati}) still needs to deal with $P_i,i\in\mathcal{N}$. However, the Kalman equation (\ref{Eq:KuceraIdentityGeneral2}) produces linear equations that depend only on $Q_i,R_{ii},i\in\mathcal{N}$.

Note that a result similar to (\ref{Eq:VectorizedRiccati}) is presented in \cite[Lemma~2]{inga2019solution}, which removes the dependency on $P_i,i\in\mathcal{N},$ by using a stringent assumption that $(I_n\otimes B_i')$ is invertible. However, in the Kalman equation, we do not require such assumption.
\end{remark}

\section{Conclusions}
In this paper, we have answered the fundamental question: When is a given profile of strategies $\mathbf{K}^\dag$ Nash for a non-cooperative differential game $(A,[B_1,B_2,\cdots,B_N])$? The answer is characterized by a necessary and sufficient condition posed in the frequency-domain representation of the system. The condition provides a workaround to the inverse problem without numerically solving the convex feasibility problem. The Kalman equation reduces the redundancy in the coupled Riccati equation derived in the state-space representation. Future work lies around demonstrating the theories using application-driven examples.
\appendix

\subsection{Lemmas}
\begin{lemma}\label{Lemma:SpectralFactorization}
Let $\Pi_i(s)$ be defined in (\ref{Eq:DefinePhi}) for $i\in\mathcal{N},$ and the polynomial rank of $\Phi_i(s)$ is $p_i$. Suppose that Assumption \ref{Assum:StratStable} and the circle criterion (\ref{Eq:CircleCriterion}) holds. Then, there exists a $p_i \times m_i$ polynomial matrix $N_i(s)$ such that 
\begin{equation}\label{Eq:SpectralFactorizationPhi}
\Phi_i(s) = N_i'(s) N_i(s)
\end{equation}
with the rank of $N_i(s)$ is equal to $p_i$ for all $s$ such that $\mathfrak{Re}(s)>0$.
\end{lemma}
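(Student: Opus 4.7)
The plan is to cast this as a spectral factorization of the para-Hermitian polynomial matrix $\Phi_i(s)$ and then normalize the factor so its finite zeros lie in the closed left half-plane, which gives the rank condition on $\mathbb{C}_+$.

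First, I would verify the hypotheses of a spectral factorization theorem. From the definition $\Phi_i(s)=\tilde{D}_i'(-s)\tilde{D}_i(s)-D_i'(-s)D_i(s)$ in (\ref{Eq:DefinePhi}), a direct calculation yields $\Phi_i'(-s)=\Phi_i(s)$, so $\Phi_i$ is para-Hermitian. The circle criterion (\ref{Eq:CircleCriterion}) gives $\Phi_i(j\omega)\succeq 0$ for every real $\omega$, and the polynomial rank of $\Phi_i$ is $p_i$ by hypothesis. Assumption \ref{Assum:StratStable} ensures that $\tilde{D}_i(s)$ is Hurwitz, which becomes relevant when controlling the zero locations of the spectral factor.

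Next, I would apply a rank-deficient Youla-type spectral factorization to obtain a $p_i\times m_i$ polynomial matrix $N_i(s)$ with $\Phi_i(s)=N_i'(-s)N_i(s)$ (the form in which the factor is used later in the proof of Theorem \ref{Theo:FeasiblityCheck}). A constructive route is to bring $\Phi_i$ to its Smith normal form $\mathrm{diag}(\phi_1(s),\ldots,\phi_{p_i}(s),0,\ldots,0)$, noting that each nonzero invariant factor $\phi_k$ is an even polynomial in $s$ because $\Phi_i$ is para-Hermitian and nonnegative on $j\mathbb{R}$. Each scalar $\phi_k$ then admits the classical polynomial factorization $\phi_k(s)=n_k(-s)n_k(s)$, and reassembly (inverting the Smith transformations) yields a polynomial matrix $N_i(s)$ of the stated dimension satisfying the factorization identity.

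To secure the rank condition on the open right half-plane, I would exploit the nonuniqueness of the factorization: whenever a scalar factor $n_k$ has a root $s_0\in\mathbb{C}_+$, I would reflect it to $-s_0\in\mathbb{C}_-$ by left-multiplying $N_i$ by a suitable unimodular $p_i\times p_i$ polynomial matrix, an operation that preserves the product $N_i'(-s)N_i(s)=\Phi_i(s)$. Purely imaginary roots have even multiplicity (because $\Phi_i\succeq 0$ on $j\mathbb{R}$) and can be split symmetrically, so the factor stays polynomial. After finitely many such reflections, every nontrivial $p_i\times p_i$ minor of $N_i$ is nonvanishing on $\mathbb{C}_+$, so $\mathrm{rank}\,N_i(s)=p_i$ for every $s$ with $\mathfrak{Re}(s)>0$.

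The main obstacle is the rank-deficient spectral factorization step itself: the classical Youla theorem is stated for square full-rank para-Hermitian matrices, and extending it to the rank-deficient case requires careful bookkeeping — either through the Smith decomposition outlined above or via a minimal-realization argument — to guarantee that $N_i$ remains a genuine polynomial matrix (not merely rational) both after the initial factorization and after the zero-relocation step, while preserving both the dimensions $p_i\times m_i$ and the full row rank on $\mathbb{C}_+$.
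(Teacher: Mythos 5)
Your high-level strategy (reduce the rank-deficient case to a square full-rank spectral factorization, then relocate right-half-plane zeros by unimodular modifications) matches the spirit of the paper's proof, and you correctly identified that the factor must satisfy $\Phi_i(s)=N_i'(-s)N_i(s)$ for the later use in Theorem~\ref{Theo:FeasiblityCheck}. However, the central step of your construction has a genuine gap: the Smith normal form is an \emph{equivalence} transformation, $\Phi_i(s)=U(s)\Sigma(s)V(s)$ with $U$ and $V$ two unrelated unimodular matrices, not a \emph{para-congruence} $L_i'(-s)\Phi_i(s)L_i(s)$. Consequently, (i) nonnegativity of $\Phi_i(j\omega)$ does not transfer to the invariant factors, so the claim that each $\phi_k(j\omega)\geq 0$ (which is what the scalar factorization $\phi_k(s)=n_k(-s)n_k(s)$ over the reals actually requires) is unjustified; and (ii) even granting a factorization $\Sigma(s)=\Delta'(-s)\Delta(s)$, ``inverting the Smith transformations'' gives $\Phi_i(s)=U(s)\Delta'(-s)\Delta(s)V(s)$, which is of the form $N_i'(-s)N_i(s)$ only if $U(s)=V'(-s)$ --- a property the Smith decomposition does not provide. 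So the reassembly step does not produce a spectral factor.

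The paper avoids this by using a single unimodular column compression $\Phi_i(s)L_i(s)=\begin{bmatrix}\tilde{\Phi}_i(s) & 0\end{bmatrix}$ and then exploiting the para-Hermitian symmetry $\Phi_i'(-s)=\Phi_i(s)$ to upgrade it to a genuine para-congruence
$L_i'(-s)\Phi_i(s)L_i(s)=\mathrm{diag}\bigl(\hat{\Phi}_i(s),0\bigr)$,
under which positivity on the imaginary axis \emph{is} preserved; the full-rank $p_i\times p_i$ block $\hat{\Phi}_i$ is then factored by the polynomial spectral factorization result of \cite{callier1985polynomial} (which already delivers a factor of full rank on $\mathfrak{Re}(s)>0$, so no separate zero-flipping stage is needed), and $N_i(s)=\begin{bmatrix}\hat{N}_i(s) & 0\end{bmatrix}L_i^{-1}(s)$. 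To repair your argument you would need to replace the Smith reduction by such a symmetric (para-congruent) block-diagonalization, or otherwise invoke a rank-deficient spectral factorization theorem directly rather than deriving it from the scalar case.
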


\begin{proof}
From \cite{callier1985polynomial}, we know that when $p_i = m_i$, there exist a spectral factorization satisfy (\ref{Eq:SpectralFactorizationPhi}). Now assume that $p_i<m_i$. Then, there exists a unimodular matrix $L_i(s)$ such that
$$
\Phi_i(s)L_i(s) = \begin{bmatrix}
\tilde{\Phi}_i(s) & 0
\end{bmatrix},
$$
where $\tilde{\Phi}_i$ is an $m_i\times p_i$ polynomial matrix with rank $p_i$. By definition (\ref{Eq:DefinePhi}), $\Phi(-s)_i'= \Phi_i(s)$. Obviously,
$$
L_i'(-s)\Phi_i(s) = L_i(-s)\Phi_i'(-s) = \begin{bmatrix}
\tilde{\Phi}_i'(-s)\\ 0
\end{bmatrix}.
$$
Post-multiplying it by $L_i(s)$ yields 
$$
L_i'(-s)\Phi_i(s)L_i(s) = \begin{bmatrix}
\hat{\Phi}_i(s) & 0\\
0 & 0
\end{bmatrix}
$$
where $\hat{\Phi}$ is a $p_i\times p_i$ polynomial matrix who has full rank because $\Phi_i(s)$ has rank $p_i$, and $L_i(s)$ is unimodular. Since the circle criterion holds, $\hat{\Phi}_i(jw)\geq 0$ for all $w\in\mathbb{R}$. Hence, by \cite{callier1985polynomial}, we can find a $p_i\times p_i$ polynomial matrix $\hat{N}_i(s)$ such that 
$$
\hat{\Phi}_i(s) = \hat{N}_i'(-s)\hat{N}_i(s)
$$
where $\hat{N}(s)$ has rank $p_i$ for all $s$ such that $\mathfrak{Re}(s)>0$. Therefore, we can construct 
$$
N_i(s)\coloneqq \begin{bmatrix}\hat{N}_i(s) & 0
\end{bmatrix}L_i^{-1}(s)
$$
such that the spectral factorization (\ref{Eq:SpectralFactorizationPhi}) exist.

\end{proof}










\bibliography{references}

\bibliographystyle{IEEEtran}

\end{document}